\documentclass[11pt,english]{amsart}
\usepackage[T1]{fontenc}
\usepackage[latin9]{inputenc}
\usepackage{babel}
\usepackage{enumitem}
\usepackage{amsthm}
\usepackage{amssymb}
\usepackage[letterpaper]{geometry}
\usepackage[pdfusetitle,
 bookmarks=true,bookmarksnumbered=false,bookmarksopen=false,
 breaklinks=false,pdfborder={0 0 1},backref=false,colorlinks=false]
 {hyperref}
\hypersetup{
 colorlinks=true,linkcolor=teal,citecolor=purple,linktocpage=true}

\makeatletter
\theoremstyle{plain}
\newtheorem{thm}{\protect\theoremname}
\theoremstyle{definition}
\newtheorem{defn}[thm]{\protect\definitionname}
\theoremstyle{definition}
\newtheorem{convention}[thm]{Convention}
\theoremstyle{plain}
\newtheorem{lem}[thm]{\protect\lemmaname}
\theoremstyle{remark}
\newtheorem{rem}[thm]{\protect\remarkname}

\@ifundefined{date}{}{\date{}}
\usepackage{amsmath, tikz-cd, amssymb, placeins}
\usepackage{listings}
\DeclareRobustCommand*\cal{\@fontswitch\relax\mathcal}

\allowdisplaybreaks

\usetikzlibrary{calc}
\usetikzlibrary{decorations.pathmorphing}

\tikzset{curve/.style={settings={#1},to path={(\tikztostart)
    .. controls ($(\tikztostart)!\pv{pos}!(\tikztotarget)!\pv{height}!270:(\tikztotarget)$)
    and ($(\tikztostart)!1-\pv{pos}!(\tikztotarget)!\pv{height}!270:(\tikztotarget)$)
    .. (\tikztotarget)\tikztonodes}},
    settings/.code={\tikzset{quiver/.cd,#1}
        \def\pv##1{\pgfkeysvalueof{/tikz/quiver/##1}}},
    quiver/.cd,pos/.initial=0.35,height/.initial=0}

\tikzset{tail reversed/.code={\pgfsetarrowsstart{tikzcd to}}}
\tikzset{2tail/.code={\pgfsetarrowsstart{Implies[reversed]}}}
\tikzset{2tail reversed/.code={\pgfsetarrowsstart{Implies}}}
\tikzset{no body/.style={/tikz/dash pattern=on 0 off 1mm}}

\makeatother

\providecommand{\definitionname}{Definition}
\providecommand{\lemmaname}{Lemma}
\providecommand{\remarkname}{Remark}
\providecommand{\theoremname}{Theorem}

\begin{document}
\title{The Rasmussen $s$-invariant and exotic $4$-manifolds}
\author{Gheehyun Nahm}
\address{Department of Mathematics, Princeton University, Princeton, New Jersey
08544, USA}
\email{gn4470@math.princeton.edu}
\thanks{The author was partially supported by the ILJU Academy and Culture
Foundation, the Simons collaboration \emph{New structures in low-dimensional
topology}, and a Princeton Centennial Fellowship.}
\begin{abstract}
We give a short, analysis-free proof, inspired by Ren and Willis's,
of the existence of exotic compact, orientable $4$-manifolds. There
are two distinguishing features of our proof. First, we avoid skein
lasagna modules; we use Beliakova and Wehrli's generalization of Rasmussen's
$s$-invariant to links in $S^{3}$ directly. Second, we reduce the
complexity of the computations by choosing clever induction hypotheses
in Sto\v{s}i\'{c}'s induction scheme; this in particular allows
us to avoid Ren and Willis's Comparison Lemma.
\end{abstract}

\maketitle
In this note we give a short, analysis-free proof, inspired by Ren
and Willis's recent breakthrough \cite{ren2024khovanovv3}, of the
following theorem.
\begin{thm}
\label{thm:mainthm}The homeomorphic knot traces $X_{1}(5_{2})$ and
$X_{1}(P(-3,3,8))$ are not diffeomorphic.
\end{thm}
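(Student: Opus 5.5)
The plan follows Ren and Willis's strategy, but works with link invariants in $S^3$ instead of skein lasagna modules. The invariant is the \emph{trace genus detector} coming from the $s$-invariant. For a knot $K$ and an integer $n$, let $K^{(n)}$ denote the $n$-cable link: $n$ parallel copies of $K$ with the $0$-framing relative to the trace framing (here framing $1$). We orient the copies in all $\binom{n}{k}$ possible ways and consider the Beliakova--Wehrli $s$-invariant of the resulting oriented links.

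\textbf{Step 1: topological input.} $X_1(5_2)$ and $X_1(P(-3,3,8))$ are homeomorphic because the two knots share a zero-surgery with the appropriate framing identification (the known RBG-type construction). We take this as given from the literature.

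\textbf{Step 2: the diffeomorphism invariant.} Suppose a class $\alpha\in H_2(X_1(K))=\mathbb{Z}$ is represented by a smoothly embedded surface. Make it transverse to the cocore disk, so that removing a neighborhood of the cocore gives a surface in $B^4$. Its boundary is a cable of $K$ with the framing-$1$ twisting, with $p$ strands oriented one way and $q$ the other, where $p-q$ is the class of $\alpha$. Beliakova--Wehrli's genus bound for $s$ of links (slice Euler characteristic $\le$ a function of $s$) then bounds the genus of $\alpha$ from below in terms of $s$ of these cables.

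To get a diffeomorphism invariant, we use the following extremal property, after tubing or adjoining pairs of oppositely oriented strands as necessary. $X_1(K)$ contains a sphere (or a surface of controlled genus) in the generator class iff some cable condition holds. Concretely, we aim to prove:
\begin{itemize}
\item $X_1(P(-3,3,8))$ contains a smoothly embedded sphere representing the generator. This holds because $P(-3,3,8)$ is slice; indeed the pretzel $P(-3,3,8)$ is ribbon, so the core of the $1$-handle union a slice disk gives the sphere.
\item $X_1(5_2)$ does not contain such a sphere. Every such sphere yields, for some $n$, a genus-$0$ cobordism whose boundary is a $(p,q)$-oriented cable of $5_2$ with $p-q=1$. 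Hence $s$ of that link would have to be as large as the slice bound forces. We then show $s$ of every such oriented cable of $5_2$ (with the $1$-twist) violates this.
\end{itemize}

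\textbf{Step 3: computing $s$ for cables of $5_2$.} This is the main obstacle: we need $s$ for infinitely many links, the twisted cables $5_2^{(p,q)}$ for all $p,q$. The plan is to induct on the number of full twists, using Sto\v{s}i\'{c}'s induction scheme on torus-link-like twist regions. Each crossing change or resolution gives a long exact sequence in Bar-Natan/Lee homology. The $s$-invariant is tracked through the top filtration degree of the canonical generators. The key choice is the induction hypothesis. We will prove the following simultaneously for all orientations: the canonical Lee generator for the given orientation survives in a specific quantum filtration degree, and some homological grading region of the filtered complex vanishes. That vanishing is what makes the connecting maps in the exact sequence filtered isomorphisms in the relevant degree. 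Strengthening the hypothesis this way is what should let us avoid the Comparison Lemma: the exact-triangle maps are then automatically forced to preserve the canonical generators.

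The base case is $5_2$ viewed as a twist knot, untwisting to the unknot or Hopf-link cables, where $s$ is computed from positivity. For positive (or negative) links, $s$ is read off from the diagram by Beliakova--Wehrli. Combining, we obtain $s\big(5_2^{(p,q)}\big)$ exceeding the bound needed for a sphere, contradicting the existence of a diffeomorphism. I expect the hardest part to be formulating the vanishing hypothesis so that it is preserved at each step of the induction.
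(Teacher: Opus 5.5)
Your Steps 1--2 match the paper's argument, up to two slips: the sphere in $X_1(P(-3,3,8))$ is a slice disk capped by the core of the $2$-handle, and the traces' boundaries are $+1$-surgeries. A sphere in the generator class of $X_1(5_2)$ gives a connected cobordism of Euler characteristic $-2n$ from $U$ to the $1$-framed cable $K_n^1$ ($n+1$ strands one way, $n$ the other). The $s$-bound then forces $s(K_n^1)\ge -2n$, so the whole proof rests on showing $s(K_n^1)<-2n$ for every $n\ge 0$.

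This is where the proposal has a genuine gap. You never say what $s(K_n^1)$ is or why it is small. You never identify the relevant property of $5_2$: it is a negative knot with $s=-2$, and the needed input is $s(K_n^1)=s(K)-2n$ for negative $K$ (Ren--Willis). Your Step~3 also leaves the induction hypothesis explicitly unformulated. The base case is off as well. The cable diagrams of $K_n^1$ are not negative, since crossings between oppositely oriented strands are positive, so $s$ cannot be read off a diagram. And the induction does not untwist $5_2$ itself.

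Concretely, three ideas are missing.

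\textbf{(i) Reduction to a single band map.} Let $\varphi\colon Kh^0(K_m^1\sqcup U)\to Kh^0(K_{m+1}^1)$ be the band dual to merging two oppositely oriented strands. It suffices that $\varphi$ is injective and that the Lee spectral sequences collapse in homological degree $0$. Then $\varphi_\infty$ is nonzero on the Lee generator. Since $\varphi$ has $q$-degree $-1$ and the extra $U$ lowers the generator's $q$-grading by $1$, $s$ drops by exactly $2$ at each step.

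\textbf{(ii) Sto\v{s}i\'{c}'s induction on the \emph{parallel}-oriented twisted cables $K^f_{m,a,i}$.} Start from the blackboard-framed cable of a negative diagram, which is a negative link, so its homology vanishes above the degree of the oriented resolution. Then add positive crossings. The hypotheses are purely homological. After shifting so the parallel Lee generator sits in degree $-(2m+1)^2/2+1/2$ (which puts the antiparallel generator of $K_m^1$ in degree $0$), they are (A) $\overline{Kh}^h=0$ for $h>0$, and (B) $\dim\overline{Kh}^0=\dim\overline{Kh}^0_{Lee}$.

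\textbf{(iii) The degree bound that drives the induction.} One needs $d\ge 0$, strict if $a<2m$, for the degree shift in the skein triangle. This comes from comparing homological gradings of Lee generators of different orientations via linking numbers and crossing counts. For $a<2m$ it gives $\overline{Kh}^0(L)\cong\overline{Kh}^0(L_o)$, roughly the isomorphism you anticipate. The decisive case $a=2m$ has no such isomorphism. There the Lee triangle splits because its third map is a nonorientable band map, hence zero. A dimension count using (B) for $L_o$ and $L_u$ then gives both (B) for $L$ and injectivity of $G\colon\overline{Kh}^d(L_u)\to\overline{Kh}^0(L)$. For $L=K^0_{m,2m,2m}$, this map $G$ is exactly $\varphi$.

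Your suggested hypothesis tracks quantum filtration degrees of Lee generators for all orientations and asks connecting maps to be filtered isomorphisms. You give no mechanism for preserving it, and it is not what makes the induction close: no quantum information is used until step (i).
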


Theorem~\ref{thm:mainthm} was first proved by Akbulut \cite{MR1094460,MR1094459},
who distinguished the exotic pair via gauge theory. Remarkably, Ren
and Willis distinguished them combinatorially; they used the $\mathfrak{gl}_{2}$
Khovanov--Rozansky skein lasagna module defined by Morrison, Walker,
and Wedrich \cite{MWW}. In this note, we give a short, skein lasagna-free
proof of Theorem~\ref{thm:mainthm} as follows. First, we give a
direct proof of Theorem~\ref{thm:mainthm} assuming Theorem~\ref{thm:sinv},
which is the main technical input from Khovanov homology. Then, we
give a simpler proof of Theorem~\ref{thm:sinv}.
\begin{defn}
Given a knot $K$ in $S^{3}$, $f\in\mathbb{Z}$, and $n\in\mathbb{Z}_{\ge0}$,
let $K_{n}^{f}$ denote $f$-framed $(2n+1)$\nobreakdash-cables
of $K$, where $(n+1)$ strands are oriented in one way and the remaining
$n$ strands are oriented in the other way.
\end{defn}

\begin{thm}[{\cite[Theorems~1.13~and~5.5]{ren2024khovanovv3}}]
\label{thm:sinv}If $K$ is a negative knot, then $s(K_{n}^{1})=s(K)-2n$.
\end{thm}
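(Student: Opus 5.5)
We prove both inequalities $s(K_n^1)\le s(K)-2n$ and $s(K_n^1)\ge s(K)-2n$, where $s$ denotes Beliakova and Wehrli's link version of Rasmussen's invariant. The argument is by induction on $n$ inside Sto\v{s}i\'{c}'s induction scheme. We use two standard properties of $s$. First, the cobordism inequality: a connected oriented cobordism $\Sigma$ between links $L$ and $L'$ gives $|s(L)-s(L')|\le-\chi(\Sigma)$. Sharper versions hold for cobordisms built only from saddles with a fixed orientation behaviour, in the style of Beliakova--Wehrli. Second, the exact triangle of Khovanov homology for a crossing resolution, together with the bound $s(L)\ge$ (lowest filtration degree information) coming from the Lee generators.

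\emph{Upper bound.} Start from $K_n^1$. Take a pair of adjacent oppositely oriented strands of the cable and do one oriented saddle joining them. After this saddle the two strands form a band parallel to $K$. That band can be removed, up to isotopy in the cable, by capping it off with a single death. The result is a cobordism from $K_n^1$ to $K_{n-1}^1$ of Euler characteristic $-1+1=0$. Composing such cobordisms gives $s(K_n^1)\le s(K)+(\text{correction})$, but this alone only yields $|s(K_n^1)-s(K)|\le 0$-type statements up to the Euler characteristic. To get the precise shift $-2n$ we instead track gradings: we compare the Lee generator corresponding to the oriented resolution of $K_n^1$ with the oriented resolution of $K$ tensored with $n$ trivially-oriented pairs. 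In the crossingless $1$-framed twisting region, the $n$ annular pairs contribute a quantum shift of exactly $-2$ each.

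\emph{Lower bound, via induction.} Here negativity of $K$ is essential. We choose a diagram of $K$ with only negative crossings. In the cable diagram, every crossing coming from $K$ between strands of the same orientation is negative, and crossings between opposite strands are positive. The $+1$ framing twist contributes a full twist on $2n+1$ strands. We resolve the crossings of that full twist one at a time, following Sto\v{s}i\'{c}'s scheme, and at each stage use the long exact sequence to show that the relevant Lee class survives in the expected filtration degree. The induction hypothesis is stated not just for $K_n^1$ but for the whole family of partially twisted cables $K_{n,j}$ appearing in Sto\v{s}i\'{c}'s scheme. The hypothesis asserts that Khovanov homology vanishes in homological degrees between $0$ and some explicit negative bound, and that $s$ takes the predicted value. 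The homological vanishing for negative diagrams (all the Khovanov homology sits in nonpositive homological degree, with an explicit gap) lets the exact triangles split in the relevant degree. The value of $s$ then propagates from $K_{n-1}^1$, with one fewer pair, to $K_n^1$ with shift $-2$.

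\textbf{Main obstacle.} The hardest step is choosing the induction hypothesis so that the exact triangles actually degenerate. One must pin down, for each intermediate link in Sto\v{s}i\'{c}'s scheme, a homological-degree window in which Khovanov homology vanishes, and this window must be strong enough to rule out the connecting maps killing the Lee class. I would first try a hypothesis of the form ``$\mathit{Kh}^{i}=0$ for $-c(j)<i<0$, with $c(j)$ linear in the number of remaining twist crossings''. I would then check that it is preserved under both the $0$- and $1$-resolutions. This replaces Ren--Willis's Comparison Lemma.
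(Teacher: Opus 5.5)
There is a genuine gap: the nontrivial inequality is never proved, and the two directions are assigned to the wrong arguments.

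\textbf{The easy direction.} The bound $s(K_n^1)\ge s(K_{n-1}^1)-2$ needs no induction. The birth followed by the split band $K_{n-1}^1\sqcup U\to K_n^1$ induces a filtered map of degree $-1$. It sends $x_{K_{n-1}^1\sqcup U}$ to a nonzero multiple of $x_{K_n^1}$, and $\mathrm{gr}_q(x_{K_{n-1}^1\sqcup U})=\mathrm{gr}_q(x_{K_{n-1}^1})-1$. Dually, your merge-plus-death cobordism gives only $s(K_n^1)\le s(K_{n-1}^1)$. Its annulus component has no boundary on $K_{n-1}^1$, so no two-sided bound follows; a statement like $|s(K_n^1)-s(K)|\le 0$ would contradict the theorem.

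\textbf{The hard direction is not established.} The real content is $s(K_n^1)\le s(K_{n-1}^1)-2$, which is also the direction used for the exotic pair. Your ``upper bound'' paragraph does not prove it. The $q$-degree of a Lee cycle in the oriented resolution only bounds the filtration degree of its homology class from \emph{below}, because the class might be represented by a cycle of higher filtration. So ``each pair contributes exactly $-2$'' is the theorem restated. In addition, the twist region is not crossingless, and from a negative diagram of writhe $w$ you need $1-w$ positive full twists, not one.

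\textbf{The missing idea: what the induction must output.} Exact triangles in Khovanov homology do not propagate values of $s$ by themselves. So ``$s$ takes the predicted value'' is not a usable hypothesis for the intermediate links. What works is to prove two things.
\begin{itemize}
\item $\dim Kh^0=\dim Kh^0_{Lee}$ for $K_m^1\sqcup U$ and $K_{m+1}^1$, so the Lee spectral sequence collapses in homological degree $0$.
\item The split band map $\varphi\colon Kh^0(K_m^1\sqcup U)\to Kh^0(K_{m+1}^1)$ is injective.
\end{itemize}
Together these give $\varphi_\infty(x_{K_m^1\sqcup U})\ne 0$ on the associated graded, which forces the drop of exactly $2$.

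\textbf{How the paper gets this from Sto\v{s}i\'{c}'s scheme.} It inducts on the hypotheses (A) $\overline{Kh}^h=0$ for $h>0$ and (B) $\dim\overline{Kh}^0=\dim\overline{Kh}^0_{Lee}$, for all $K^f_{m,a,i}$ with $w\le f\le 0$. The step uses two inputs you do not have.
\begin{itemize}
\item The estimate $d\ge 0$, with $d>0$ when $a<2m$, for the homological shift of the unoriented resolution. This is proved by comparing homological gradings of Lee generators through linking numbers.
\item The vanishing on Lee homology of the nonorientable band map when $a=2m$.
\end{itemize}
With these, the exact sequences give a chain of inequalities that must all be equalities. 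That yields (B) for $L$ and injectivity of the connecting map $G$, and for $L=K^0_{m,2m,2m}$ the map $G$ is exactly $\varphi$.

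Your proposed window ``$Kh^i=0$ for $-c(j)<i<0$'' is not this hypothesis, and you do not show it is preserved. As you acknowledge, the choice of induction hypotheses, which is the heart of the argument, is left open.
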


\begin{proof}[Proof of Theorem~\ref{thm:mainthm} assuming Theorem~\ref{thm:sinv}]
On the one hand, since $P(-3,3,8)$ is slice, by capping off a slice
disk with the core of the $2$-handle, we see that there exists a
smooth $2$-sphere that represents a generator of $H_{2}(X_{1}(P(-3,3,8)))\cong\mathbb{Z}$.

On the other hand, we claim that there does not exist a smooth $2$-sphere
$S$ that represents a generator of $H_{2}(X_{1}(5_{2}))\cong\mathbb{Z}$.
Assume that there exists such a $2$-sphere $S\subset X_{1}(5_{2})$.
Isotope $S$ so that $S$ intersects the cocore $CC$ of $X_{1}(5_{2})$
transversely; then $S$ and $CC$ intersect positively at $(n+1)$
points and negatively at $n$ points for some $n\in\mathbb{Z}_{\ge0}$.
Let $p\in S$ be a point away from $CC$, and let $N(CC\sqcup\{p\})$
be an open tubular neighborhood of $CC\sqcup\{p\}$. Then, $X_{1}(5_{2})\setminus N(CC\sqcup\{p\})$
is diffeomorphic to $[0,1]\times S^{3}$, and the surface $S$ restricted
to this complement is a connected, oriented link cobordism in $[0,1]\times S^{3}$
from the unknot $U$ to the link $K_{n}^{1}$ for $K=5_{2}$. This
link cobordism is topologically $S^{2}\setminus(\sqcup^{2n+2}\mathring{D^{2}})$,
and so it has Euler characteristic $-2n$. However, since $5_{2}$
is a negative knot, $s(K_{n}^{1})=s(5_{2})-2n=-2-2n$ by Theorem~\ref{thm:sinv},
and since $-2-2n<-2n$, this contradicts the slice genus bound from
\cite[Proposition~7.6~(6)]{MR4541332} (compare~\cite{Ra,MR2462446}).
\end{proof}
The rest of this note is organized as follows. First, we establish
our conventions. Then, we reduce Theorem~\ref{thm:sinv} to Lemma~\ref{lem:band-map}.
Finally, we prove Lemma~\ref{lem:band-map} which takes up most of
this note.
\begin{convention}
We work over a field of characteristic $\neq2$. Alternatively, we
may work over any field and consider the Bar-Natan deformation \cite{BN1}
instead of the Lee deformation \cite{MR2173845}. Khovanov homology
\cite{MR1740682} is bigraded. If $A$ is bigraded, denote its homological
grading $h$ and quantum grading $q$ summand as $A^{h,q}$. Denote
its homological grading $h$ summand as $A^{h}$.

If $L$ is an oriented link, we denote $L$ together with an unlinked
unknot as $L\sqcup U$.

Let $L$ be an oriented link, and let $\mathfrak{o}$ be an orientation
on $L$. Denote as $x_{\mathfrak{o}}\in Kh_{Lee}(L)$ the corresponding
Lee generator \cite{MR2173845}, defined up to multiplication by a
nonzero element of the base field. If $\mathfrak{o}$ is the orientation
of $L$, write $x_{L}:=x_{\mathfrak{o}}$; the $s$-invariant is $s(L):={\rm gr}_{q}(x_{L})+1$.

Recall \cite{Ra} that if $B:L\to L'$ is an oriented saddle cobordism
(i.e.\ a split or merge band on $L$) between oriented links $L,L'$,
then the induced map (\emph{``band map''}) $Kh_{Lee}(L)\to Kh_{Lee}(L')$
(defined up to sign) maps $x_{L}$ to a nonzero multiple of $x_{L'}$.
Also, if $B:L\to L'$ is a nonorientable saddle cobordism, then the
induced band map $Kh_{Lee}(L)\to Kh_{Lee}(L')$ is identically zero;
compare \cite{MR4504654}.
\end{convention}

\label{pagelabel}Let us reduce Theorem~\ref{thm:sinv} to the following
lemma (Lemma~\ref{lem:band-map}). Consider a local merge band between
two oppositely oriented strands of $K_{m+1}^{1}$; doing a band surgery
yields $K_{m}^{1}$ together with an unlinked unknot $U$, which we
denote as $K_{m}^{1}\sqcup U$. Let us consider the dual band on $K_{m}^{1}\sqcup U$;
then doing a band surgery yields $K_{m+1}^{1}$.
\begin{lem}[Main lemma]
\label{lem:band-map}For all $m\ge0$, ${\rm dim}Kh^{0}(K_{m}^{1})={\rm dim}Kh_{Lee}^{0}(K_{m}^{1})$,
and the band map $\varphi:Kh^{0}(K_{m}^{1}\sqcup U)\to Kh^{0}(K_{m+1}^{1})$
is injective.
\end{lem}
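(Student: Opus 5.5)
We prove both assertions simultaneously by induction on $m$, in the spirit of Sto\v{s}i\'{c}'s induction scheme. The tool is the long exact sequence obtained by resolving a crossing.

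\textbf{Setting up the exact sequence.} Choose a diagram of $K_{m+1}^{1}$ and single out a crossing $c$ in the region where the merge band acts. The two resolutions of $c$ should be:
\begin{itemize}
\item the diagram of $K_{m}^{1}\sqcup U$, up to Reidemeister moves (possibly after a shift);
\item a diagram $D'$ obtained from $K_{m}^{1}$ by adding a twisted region, whose homology in the relevant homological degrees we control.
\end{itemize}
The saddle between the two resolutions is exactly the dual band. So the band map $\varphi$ appears, up to grading shifts and sign, as the connecting or edge map in the skein exact sequence
\[
\cdots\to Kh^{-1}(D')\to Kh^{0}(K_{m}^{1}\sqcup U)\xrightarrow{\varphi}Kh^{0}(K_{m+1}^{1})\to Kh^{0}(D')\to\cdots.
\]
Injectivity of $\varphi$ is then equivalent to the vanishing (or surjectivity) of the preceding map, which follows if $Kh^{-1}(D')=0$ in the appropriate gradings.

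\textbf{Vanishing of $Kh^{-1}(D')$.} To obtain this, I would strengthen the induction hypothesis to a vanishing statement for the negative homological degrees of the auxiliary links. This is the place where negativity of $K$ enters: a negative knot has a diagram with only negative crossings. After choosing orientations, the positive crossings in the cable come only from the $+1$ framing twist and from the crossings between oppositely oriented strands. The count of negative crossings $n_{-}$ then bounds the homological support of Khovanov homology from below.

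\textbf{Dimension statement.} The first claim, $\dim Kh^{0}=\dim Kh_{Lee}^{0}$, amounts to the degeneration of the Lee spectral sequence in homological degree $0$. Let $\mathcal{O}_0$ denote the set of orientations of $K_{m}^{1}$ whose Lee generators lie in degree $0$. Lee generators in degree $0$ correspond to orientations $\mathfrak{o}$ with equal linking data, i.e. vanishing linking-number shift, and $\dim Kh_{Lee}^{0}=|\mathcal{O}_0|$. I would prove the upper bound $\dim Kh^{0}(K_{m}^{1})\le|\mathcal{O}_0|$ inductively from the same exact sequence: bound $\dim Kh^{0}(K_{m+1}^{1})$ by $\dim Kh^{0}(K_{m}^{1}\sqcup U)+\dim Kh^{0}(D')$, compute $\dim Kh^{0}(D')$ by a further induction on the twist region, and check that the counts add up to the number of balanced orientations of $K_{m+1}^{1}$. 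The reverse inequality $\dim Kh^{0}\ge\dim Kh_{Lee}^{0}$ is automatic from the spectral sequence. The base case $m=0$ is $K$ itself, where $Kh^{0}(K)$ has dimension $2$ for a negative knot with this framing. This should follow from Khovanov homology of negative knots being supported in degrees $\le0$, with degree $0$ being $\mathbb{Q}^2$ in the thin part, or from a direct check.

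\textbf{Main obstacle.} I expect the main difficulty to be choosing the right auxiliary family $D'$ and the right strengthened hypothesis, so that the vanishing of $Kh^{-1}$ and the dimension count close up under the induction. The twisted region in $D'$ produces a further family of links, and I would first try to handle them via iterated skein sequences on the twist crossings, each time using only degree bounds from negative crossing counts.
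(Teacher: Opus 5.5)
There is a genuine gap in how you propose to prove injectivity of $\varphi$. You reduce it to the vanishing of the group that precedes $\varphi$ in the skein sequence, and you want that vanishing from negativity, via a lower bound on the support in terms of $n_-$. This cannot work. Negativity controls the \emph{top} of the support: for a negative diagram everything sits in degrees $\le 0$, and Statement~(A) generalizes exactly this. The bound $-n_-$ is nowhere near $-1$. Moreover, the group is genuinely nonzero. Take $K$ the unknot with its crossingless diagram, which is a negative diagram of writhe $0$, and $m=0$. Then $K_1^1=T(3,3)$ is the closure of $(\sigma_1\sigma_2)^3$. Resolving its last crossing gives $K\sqcup U$ and $D'$, where $D'$ is the closure of $(\sigma_1\sigma_2)^2\sigma_1$, i.e.\ the positive torus link $T(2,4)$. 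The group mapping into $Kh^0(K\sqcup U)$ is then $\overline{Kh}^{-1}(D')$, which is $Kh^{3}(T(2,4))\cong\mathbb{Q}$ in the standard grading. So the connecting map into $Kh^0(K_m^1\sqcup U)$ is zero, but not because its source vanishes. Your proposed strengthened hypothesis (vanishing in negative degrees) is false; the correct one is vanishing in positive renormalized degrees.

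What actually proves injectivity is a dimension squeeze, and it is already latent in your dimension paragraph. Put $L=K_{m+1}^1$, $L_u=K_m^1\sqcup U$, $L_o=D'$.
\begin{itemize}
\item Since $|L|=|L_o|+1=|L_u|+1$, the band between $L_o$ and $L_u$ is nonorientable. It therefore induces zero on Lee homology, so $0\to Kh^0_{Lee}(L_u)\to Kh^0_{Lee}(L)\to Kh^0_{Lee}(L_o)\to0$ is exact. No counting of balanced orientations is needed.
\item Exactness of the Khovanov sequence gives $\dim Kh^0(L)\le\dim Kh^0(L_u)+\dim Kh^0(L_o)-\dim\ker\varphi$.
\item The spectral sequence gives $\dim Kh^0_{Lee}(L)\le\dim Kh^0(L)$.
\end{itemize}
So if $\dim Kh^0=\dim Kh^0_{Lee}$ holds for both $L_u$ and $L_o$, every inequality is an equality and $\ker\varphi=0$. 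Injectivity and the dimension statement come out of the same argument.

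This only works once you know $\dim Kh^0(D')=\dim Kh^0_{Lee}(D')$ for a $D'$ that is not of the form $K_{m'}^1$. That is the part you defer as the ``main obstacle,'' and it is the real content of the proof. The paper inducts over the family $K^f_{m,a,i}$, meaning $(\sigma_1\cdots\sigma_{2m})^a\sigma_1\cdots\sigma_i$ inserted into an $f$-framed negative diagram with $w\le f\le 0$, with hypotheses (A) and (B). It uses Sto\v{s}i\'c's identification of the unoriented resolutions as smaller members of the family (possibly flipped, possibly with a split unknot). The key input is the estimate $d\ge0$, with $d>0$ when $a<2m$, on the grading shift in the skein sequence. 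That estimate comes from comparing linking numbers of Lee generators with those for the full twist $K^f_{m,2m,2m}$ and uses $f\le0$. Crude support bounds from crossing counts do not yield it.
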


\begin{proof}[Proof of Theorem~\ref{thm:sinv} assuming Lemma~\ref{lem:band-map}]
Let us show that ${\rm gr}_{q}(x_{K_{m+1}^{1}})={\rm gr}_{q}(x_{K_{m}^{1}})-2$;
then Theorem~\ref{thm:sinv} follows. Let $\varphi_{Lee}:Kh_{Lee}(K_{m}^{1}\sqcup U)\to Kh_{Lee}(K_{m+1}^{1})$
be the band map on Lee homology, and let $\varphi_{\infty}$ be the
induced map between the associated graded objects ${\rm gr}Kh_{Lee}$
of Lee homology (i.e.\ the $E_{\infty}$ page of the Lee spectral
sequence). Since ${\rm gr}_{q}(x_{K_{m}^{1}\sqcup U})={\rm gr}_{q}(x_{K_{m}^{1}})-1$
and $\mathrm{gr}_{q}\varphi_{\infty}=\mathrm{gr}_{q}\varphi=-1$,
it is sufficient to show that $\varphi_{\infty}(x_{K_{m}^{1}\sqcup U})\neq0$,
where we abuse notation and denote the image of $x_{K_{m}^{1}\sqcup U}$
in ${\rm gr}Kh_{Lee}(K_{m}^{1}\sqcup U)$ also as $x_{K_{m}^{1}\sqcup U}$.
By the first part of Lemma~\ref{lem:band-map}, for both $K_{m}^{1}\sqcup U$
and $K_{m+1}^{1}$, the Lee spectral sequence collapses on the $E_{1}$
page in homological grading~$0$. Hence, by the second part of Lemma~\ref{lem:band-map},
$\varphi_{\infty}(x_{K_{m}^{1}\sqcup U})\neq0$.
\end{proof}
As in \cite[Theorem~5.5~and~Appendix~A]{ren2024khovanovv3}, we induct
on a family of links to show Lemma~\ref{lem:band-map}, based on
Sto\v{s}i\'{c}'s induction scheme \cite{MR2308944,MR2492301}; compare
\cite{MR3116306,MR4843752}. Here, we choose clever induction hypotheses
to avoid using the Comparison Lemma~\cite[Lemma~5.8]{ren2024khovanovv3}
and most of the counting arguments; the most technical combinatorial
argument we need is \cite[Figures~15--20]{MR3116306} (i.e.\ \cite[Lemma~A.3~(1)~and~(2)]{ren2024khovanovv3}).

\begin{defn}[Auxiliary links for the induction scheme]
For $m,a,i\in\mathbb{Z}_{\ge0}$ such that $i\le2m$, let $D_{m,a,i}$
be the braid closure of the (positive) braid on $(2m+1)$ strands
$(\sigma_{1}\cdots\sigma_{2m})^{a}\sigma_{1}\cdots\sigma_{i}$.\footnote{This is denoted as $D_{2m+1,a}^{i}$ in \cite[Appendix A]{ren2024khovanovv3}.
Our convention for braid words agrees with \cite{MR4843752,ren2024khovanovv3};
see \cite[Figure~3]{MR4843752}.} Let $D_{m,a,i}'$ be its \emph{flip}, i.e.\ the braid closure of
$\sigma_{2m+1-i}\cdots\sigma_{2m}(\sigma_{1}\cdots\sigma_{2m})^{a}$.
For $f\in\mathbb{Z}$, denote as $K_{m,a,i}^{f}$ (resp.\ $K_{m,a,i}^{f}{}')$
oriented diagrams given by inserting $D_{m,a,i}$ (resp.\ $D_{m,a,i}'$)
into an $f$-framed diagram of $K$. Here the strands of the braid
are oriented the same as $K$.
\end{defn}

\begin{defn}[Renormalization for the induction scheme]
For oriented diagrams $Z$ (such as $K_{m,a,i}^{f}$ and $K_{m,a,i}^{f}{}'$)
given by inserting the braid closure of a braid on $(2m+1)$ strands
into some diagram of $K$, shift the homological gradings of $Kh(Z)$
and $Kh_{Lee}(Z)$ by the same amount such that the Lee generator
$x_{Z}$ is in homological grading $-(2m+1)^{2}/2+1/2$. Write these
renormalized groups as $\overline{Kh}(Z)$ and $\overline{Kh}_{Lee}(Z)$.

Renormalize $Kh(Z\sqcup U)$ (resp.\ $Kh_{Lee}(Z\sqcup U)$) in homological
grading such that it agrees with $\overline{Kh}(Z)\otimes Kh(U)$,
(resp.\ $\overline{Kh}_{Lee}(Z)\otimes Kh_{Lee}(U)$). Denote it
as $\overline{Kh}(Z\sqcup U)$ (resp.\ $\overline{Kh}_{Lee}(Z\sqcup U)$).
\end{defn}

Before we start the induction argument, let us do some bookkeeping
(Lemma~\ref{lem:renormalize}).
\begin{lem}[{Homological gradings of Lee generators, \cite[Proposition 2.8 (3)]{ren2024khovanovv3}}]
\label{lem:hgrading}Let $L$ be a link, and let $\mathfrak{o},\mathfrak{o}'$
be two orientations on $L$. Let $L_{+}$ be the sublink of $L$ consisting
of the components where $\mathfrak{o}$ agrees with $\mathfrak{o}'$.
Let $D_{L}$ be a diagram for $L$, and denote the writhe of the oriented
diagram $(D_{L},\mathfrak{o})$ as $w(D_{L},\mathfrak{o})$. Then,
\[
\mathrm{gr}_{h}(x_{\mathfrak{o}})-\mathrm{gr}_{h}(x_{\mathfrak{o}'})=2\ell k((L_{+},\mathfrak{o}'),(L\setminus L_{+},\mathfrak{o}'))=(w(D_{L},\mathfrak{o}')-w(D_{L},\mathfrak{o}))/2.
\]
\end{lem}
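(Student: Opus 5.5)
The plan is to compute $\mathrm{gr}_{h}(x_{\mathfrak{o}})$ directly from Lee's construction, using the diagram $D_{L}$. Recall that, for an orientation $\mathfrak{o}$, the Lee generator $x_{\mathfrak{o}}$ is represented by a cycle supported in a single vertex of the cube of resolutions of $D_{L}$. This vertex is the oriented resolution of $(D_{L},\mathfrak{o})$: we take the $0$-resolution at every crossing that is positive with respect to $\mathfrak{o}$, and the $1$-resolution at every crossing that is negative with respect to $\mathfrak{o}$. Hence $x_{\mathfrak{o}}$ is homogeneous in homological grading. Its grading is the height of that vertex, $n_{-}(D_{L},\mathfrak{o})$, minus the global shift $n_{-}$ of the Khovanov complex. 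The key observation is that the global shift is computed using the fixed orientation of $L$, so it is the same for $\mathfrak{o}$ and $\mathfrak{o}'$. Therefore
\[
\mathrm{gr}_{h}(x_{\mathfrak{o}})-\mathrm{gr}_{h}(x_{\mathfrak{o}'})=n_{-}(D_{L},\mathfrak{o})-n_{-}(D_{L},\mathfrak{o}').
\]
(If one uses renormalized gradings, the same shift applies to both orientations, so the difference is unaffected.)

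Next, I would sort the crossings of $D_{L}$ into three types: those between two strands of $L_{+}$, those between two strands of $L\setminus L_{+}$, and the mixed crossings between $L_{+}$ and $L\setminus L_{+}$. Passing from $\mathfrak{o}'$ to $\mathfrak{o}$ reverses exactly the components of $L\setminus L_{+}$. So at a non-mixed crossing either both strands are reversed or neither is, and the sign is unchanged. At a mixed crossing exactly one strand is reversed, and the sign flips.

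Let $c$ be the number of mixed crossings and $p'$ the number of them that are positive for $\mathfrak{o}'$. Then
\[
n_{-}(D_{L},\mathfrak{o})-n_{-}(D_{L},\mathfrak{o}')=p'-(c-p')=\sum_{\text{mixed}}\mathrm{sign}_{\mathfrak{o}'}=2\,\ell k((L_{+},\mathfrak{o}'),(L\setminus L_{+},\mathfrak{o}')).
\]
The last equality is the usual crossing count for linking number: half the signed count of crossings between the two sublinks. For the writhe formula, the non-mixed crossings cancel in $w(D_{L},\mathfrak{o}')-w(D_{L},\mathfrak{o})$, and each mixed crossing contributes $2\,\mathrm{sign}_{\mathfrak{o}'}$. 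Hence the difference equals $4\,\ell k$, and dividing by $2$ gives the second equality.

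The only step requiring care is the first one. One must confirm the convention that the Lee generator for $\mathfrak{o}$ lies in the $\mathfrak{o}$-oriented resolution, while the homological normalization uses the given orientation of $L$ rather than $\mathfrak{o}$. Once that convention is fixed, which follows from Lee's construction \cite{MR2173845}, the rest is a sign bookkeeping argument.
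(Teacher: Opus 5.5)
Your proof is correct and is essentially the paper's argument: the paper calls the lemma ``immediate from the definitions,'' and you spell out that unwinding. The Lee generator $x_{\mathfrak{o}}$ sits in the $\mathfrak{o}$-oriented resolution at height $n_-(D_L,\mathfrak{o})$, the global shift uses the fixed orientation of $L$, and only the mixed crossings change sign. Your explicit check of the $0$/$1$-resolution convention is exactly what the paper's warning about differing from Ren--Willis's conventions refers to.
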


\begin{proof}
Immediate from the definitions; beware the difference with \cite{ren2024khovanovv3}
in conventions.
\end{proof}
\begin{lem}[Orientation $\mathfrak{o}_{p}$]
\label{lem:orientation-op}Let $f\in\mathbb{Z}$ and $m\in\mathbb{Z}_{\ge0}$,
and let $L$ be an $f$-framed $(2m+1)$-cable of a knot $K$. For
$p\in\mathbb{Z}$, $0\le p\le2m+1$, define $\mathfrak{o}_{p}$ as
the orientation of $L$ where $p$ strands are oriented the same as
$K$ and the other $(2m+1-p)$ strands are oriented in the other way.
Then, 
\[
{\rm gr}_{h}(x_{\mathfrak{o}_{p}})-{\rm gr}_{h}(x_{\mathfrak{o}_{q}})=f((2m+1-2q)^{2}-(2m+1-2p)^{2})/2.
\]
\end{lem}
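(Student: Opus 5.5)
We will deduce the formula from Lemma~\ref{lem:hgrading}, applied to the two orientations $\mathfrak{o}_p$ and $\mathfrak{o}_q$ of $L$, using the writhe form of the identity:
\[
\mathrm{gr}_h(x_{\mathfrak{o}_p})-\mathrm{gr}_h(x_{\mathfrak{o}_q})=\bigl(w(D_L,\mathfrak{o}_q)-w(D_L,\mathfrak{o}_p)\bigr)/2 .
\]
So it suffices to show $w(D_L,\mathfrak{o}_p)=f(2m+1-2p)^2+c$ for a suitable diagram $D_L$, with $c$ independent of $p$.

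For $D_L$ we take the blackboard-framed cable of a diagram $D_K$ of $K$ whose writhe $w(D_K)$ equals $f$. This can always be arranged by adding Reidemeister~I kinks to $D_K$, since the blackboard framing of a diagram equals its writhe. Then $D_L$ is just $2m+1$ parallel copies of $D_K$, with no extra twist region. Each crossing of $D_K$ becomes a $(2m+1)\times(2m+1)$ grid of crossings in $D_L$.

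Consider the grid coming from one crossing of $D_K$ with sign $\epsilon$. Give each strand a sign $\eta_j=\pm1$ according to whether it is oriented with or against $K$. The crossing between strands $j$ and $k$ then has sign $\epsilon\,\eta_j\eta_k$, because reversing one strand flips the sign of a crossing. Summing over the grid, we get
\[
\epsilon\Bigl(\sum_j\eta_j\Bigr)^2=\epsilon\,(p-(2m+1-p))^2=\epsilon\,(2m+1-2p)^2 .
\]
Summing over all crossings of $D_K$ gives
\[
w(D_L,\mathfrak{o}_p)=w(D_K)\,(2m+1-2p)^2=f\,(2m+1-2p)^2 .
\]
Substituting into the writhe form of Lemma~\ref{lem:hgrading} yields exactly
\[
f\bigl((2m+1-2q)^2-(2m+1-2p)^2\bigr)/2 .
\]
Note that the right-hand side of Lemma~\ref{lem:hgrading} is independent of the chosen diagram, so this computation on one convenient diagram suffices.

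The only point needing care is the sign convention: each crossing's sign must be exactly $\epsilon\,\eta_j\eta_k$, and the blackboard framing of $D_K$ must equal $f$, matching the definition of the $f$-framed cable. Both are standard facts.
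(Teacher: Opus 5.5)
Your proposal is correct and is essentially the paper's proof. Like the paper, you take a diagram in which every strand has blackboard framing $f$, get $w(D_L,\mathfrak{o}_p)=f(2m+1-2p)^2$, and apply the writhe form of Lemma~\ref{lem:hgrading}; the crossing-by-crossing count $\epsilon\bigl(\sum_j\eta_j\bigr)^2$ that the paper leaves implicit is spelled out correctly in your version.
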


\begin{proof}
Consider a diagram of $L$ where all the strands have framing $f$.
Then $w(K_{m}^{f},\mathfrak{o}_{p})=f(2m+1-2p)^{2}$, and so the lemma
follows by Lemma~\ref{lem:hgrading}.
\end{proof}
\begin{lem}
\label{lem:renormalize}For all $m\ge0$, $Kh(K_{m}^{1})\cong\overline{Kh}(K_{m,2m,2m}^{0})$
as homologically $\mathbb{Z}$-graded groups.
\end{lem}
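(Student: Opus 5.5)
Identify the underlying links first. The braid $(\sigma_1\cdots\sigma_{2m})^{2m}\sigma_1\cdots\sigma_{2m}=(\sigma_1\cdots\sigma_{2m})^{2m+1}$ is the full twist $\Delta^2$ on $2m+1$ strands. So $D_{m,2m,2m}$ inserted into a $0$-framed diagram of $K$ produces the $(2m+1)$-cable of $K$ with one extra full twist. Each strand of a full twist contributes framing $+1$, and any two strands pick up linking $+1$. Hence, as an unoriented link, $K_{m,2m,2m}^{0}$ is isotopic to the $1$-framed $(2m+1)$-cable of $K$, which is the underlying link of $K_m^1$. The only difference is orientation: in $K_{m,2m,2m}^0$ all strands are oriented like $K$ (orientation $\mathfrak{o}_{2m+1}$), whereas $K_m^1$ carries $\mathfrak{o}_{m+1}$.

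Khovanov homology of a link with two different orientations differs only by an overall bigrading shift, determined by the change in writhe. Thus $Kh(K_m^1)$ and $Kh(K_{m,2m,2m}^0)$ agree as homologically $\mathbb{Z}$-graded groups up to a single overall shift in $h$. The same shift applies to Lee homology, and it carries the Lee generator $x_{\mathfrak{o}}$ for a fixed orientation $\mathfrak{o}$ on one side to the corresponding generator on the other.

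It remains to check that this shift equals the renormalization. That is, under the identification $Kh(K_m^1)\cong Kh(L)[\text{shift}]$, the Lee generator $x_{\mathfrak{o}_{2m+1}}$ must sit in homological grading $-(2m+1)^2/2+1/2$ in the grading of $Kh(K_m^1)$. In $Kh(K_m^1)$, the Lee generator $x_{K_m^1}=x_{\mathfrak{o}_{m+1}}$ lies in homological grading $0$, since Lee generators for the link's own orientation always sit in degree $0$. Apply Lemma~\ref{lem:orientation-op} with $f=1$, $p=2m+1$, $q=m+1$:
\[
\mathrm{gr}_h(x_{\mathfrak{o}_{2m+1}})-\mathrm{gr}_h(x_{\mathfrak{o}_{m+1}})=\bigl((2m+1-2m-2)^2-(2m+1-4m-2)^2\bigr)/2=\bigl(1-(2m+1)^2\bigr)/2.
\]
So $x_{\mathfrak{o}_{2m+1}}$ lies in grading $-(2m+1)^2/2+1/2$ in $Kh(K_m^1)$. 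This is exactly the normalization defining $\overline{Kh}(K^0_{m,2m,2m})$, where $x_{K^0_{m,2m,2m}}=x_{\mathfrak{o}_{2m+1}}$. Since both groups are the Khovanov homology of the same unoriented link, shifted so that the same Lee generator lands in the same degree, they coincide.

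The main obstacle is bookkeeping. One must confirm that the full twist gives framing exactly $+1$ with the paper's braid and crossing conventions, rather than $-1$; this is the same positivity that makes the braids positive. One must also confirm that Lemma~\ref{lem:orientation-op} applies to the diagram $K^0_{m,2m,2m}$, which is legitimate because that lemma, via Lemma~\ref{lem:hgrading}, depends only on the link and its orientations, not on the diagram.
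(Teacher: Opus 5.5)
Your proof is correct and is essentially the paper's argument. You identify both links as the $1$-framed $(2m+1)$-cable, then use Lemma~\ref{lem:orientation-op} to check that the renormalization places the Lee generators in matching degrees; the only difference is direction, since you locate $x_{\mathfrak{o}_{2m+1}}$ in $Kh(K_m^1)$ while the paper locates $x_{\mathfrak{o}_m}$ in $\overline{Kh}(K_{m,2m,2m}^{0})$. Your labeling of $K_m^1$ by $\mathfrak{o}_{m+1}$ instead of the paper's $\mathfrak{o}_m$ is immaterial: the two differ by global orientation reversal and give the same value in Lemma~\ref{lem:orientation-op} when $f=1$.
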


\begin{proof}
As unoriented links, both $K_{m,2m,2m}^{0}$ and $K_{m}^{1}$ are
$1$-framed $(2m+1)$-cables of $K$, and so we are left to check
that $\overline{Kh}(K_{m,2m,2m}^{0})$ is renormalized such that its
homological grading agrees with that of $Kh(K_{m}^{1})$. Note that
$x_{K_{m}^{1}}=x_{\mathfrak{o}_{m}}$ and $x_{K_{m,2m,2m}^{0}}=x_{\mathfrak{o}_{2m+1}}$.
By Lemma~\ref{lem:orientation-op}, ${\rm gr}_{h}(x_{\mathfrak{o}_{m}})-{\rm gr}_{h}(x_{\mathfrak{o}_{2m+1}})=((2m+1)^{2}-1)/2$.
We renormalized $\overline{Kh}_{Lee}(K_{m,2m,2m}^{0})$ such that
${\rm gr}_{h}(x_{\mathfrak{o}_{2m+1}})=-(2m+1)^{2}/2+1/2$, and so
${\rm gr}_{h}(x_{\mathfrak{o}_{m}})=0$ in $\overline{Kh}_{Lee}(K_{m,2m,2m}^{0})$.
Since ${\rm gr}_{h}(x_{L})=0$ in $Kh_{Lee}(L)$ for any $L$ and
$x_{K_{m}^{1}}=x_{\mathfrak{o}_{m}}$, ${\rm gr}_{h}(x_{\mathfrak{o}_{m}})=0$
in $Kh_{Lee}(K_{m}^{1})$ as well.
\end{proof}
Let us start the induction argument. Fix $w\in\mathbb{Z}_{\le0}$.
We induct on quadruples of integers $(f,m,a,i)\in\mathrm{Ind}$, where
\[
{\rm Ind}:=\{(f,m,a,i)\in\mathbb{Z}^{4}:w\le f\le0,\ m\ge0,\ 0\le a,i\le2m\}
\]
is given the lexicographic order, and show the following two statements
for all knots\footnote{For each $K$, we in fact only need to consider $K$ and its orientation
reverse $rK$. Indeed, what we really want to show are Statements~(\ref{enu:induct1})~and~(\ref{enu:induct2})
for $K_{m,a,i}^{f}$ and $K_{m,a,i}^{f}{}'$; for simplicity of exposition,
we only consider $K_{m,a,i}^{f}$ and use the identity $K_{m,a,i}^{f}{}'=r((rK)_{m,a,i}^{f})$.} $K$ that admit a negative diagram of writhe $w\in\mathbb{Z}_{\le0}$:\footnote{Of course, instead of considering $K_{m,a,i}^{f}$ and inducting on
${\rm Ind}$, we may consider $K_{m,(f-w)(2m+1)+a,i}^{w}$ and induct
on $\{(m,a,i)\in\mathbb{Z}^{3}:m,a\ge0,\ 0\le i\le2m\}$, but we find
the former notationally simpler.}
\begin{enumerate}[label=(\Alph{enumi}), ref=\Alph{enumi}]
\item \label{enu:induct1}$\overline{Kh}^{h}(K_{m,a,i}^{f})=0$ for all
$h>0$
\item \label{enu:induct2}$\dim\overline{Kh}^{0}(K_{m,a,i}^{f})=\dim\overline{Kh}_{Lee}^{0}(K_{m,a,i}^{f})$
\end{enumerate}
By Lemma~\ref{lem:renormalize}, the first statement of Lemma~\ref{lem:band-map}
is precisely Statement~(\ref{enu:induct2}) for $(f,m,a,i)=(0,m,2m,2m)$.
We will show the second statement of Lemma~\ref{lem:band-map} at
the end.

The base cases of the induction are $(f,m,a,i)=(w,m,0,0)$. Since
$K$ has a negative diagram of writhe $w$, $K_{m,0,0}^{w}$ is a
negative link. Hence, the maximum homological grading of $\overline{Kh}(K_{m,0,0}^{w})$
corresponds to the oriented resolution, and so it is the homological
grading of the Lee generator, which by definition is $-(2m+1)^{2}/2+1/2$.
Statement~(\ref{enu:induct1}) holds since $-(2m+1)^{2}/2+1/2\le0$.
Statement~(\ref{enu:induct2}) trivially holds if $m>0$, and if
$m=0$, then Statement~(\ref{enu:induct2}) holds since $K_{0,0,0}^{w}=K$
is a negative knot and so the dimensions are both $2$ (compare \cite[Proposition~6.1]{MR2034399}).

Let us carry out the induction step. First, the cases where $i=0$
follow since as oriented links, $K_{m,a,0}^{f}$ and $K_{m,a-1,2m}^{f}$
are isotopic for $a\ge1$, and $K_{m,0,0}^{f+1}$ and $K_{m,2m,2m}^{f}$
are isotopic.

Hence, let $i\ge1$. For notational simplicity, let $L:=K_{m,a,i}^{f}$,
and consider the rightmost $\sigma_{i}$ of the braid $(\sigma_{1}\cdots\sigma_{2m})^{a}\sigma_{1}\cdots\sigma_{i}$.
This crossing gives rise to an unoriented skein exact triangle that
involves $L$, its oriented resolution $L_{o}$, and its unoriented
resolution $L_{u}$, and let us orient $L_{o}$ and $L_{u}$ such
that the merge and split bands respect the orientations of the links
$L,L_{o},L_{u}$.
\begin{rem}
We find the following helpful to recall: first, the order of the three
band maps (compare \cite[Lemma~5.2]{MR4843752}) in an unoriented
skein exact triangle is a cyclic permutation of nonorientable, split,
and merge. Second, there exists some permutation $B_{0},B_{1},B_{2}$
of the three links such that $|B_{0}|=|B_{1}|=|B_{2}|-1$. In this
case, the band between $B_{0}$ and $B_{1}$ is nonorientable.
\end{rem}

The punchline is that we can use the induction hypotheses (\ref{enu:induct1})~and~(\ref{enu:induct2})
for $L_{o}$ and $L_{u}$. First, indeed $L_{o}=K_{m,a,i-1}^{f}$.
Next, by \cite[Lemma~A.3~(1)~and~(2)]{ren2024khovanovv3} (also see
\cite{MR2308944,MR2492301} and \cite[Figures~15--20]{MR3116306}),
$L_{u}$ is isotopic to $K_{m',a',i'}^{f}$, $K_{m',a',i'}^{f}\sqcup U$,
$K_{m',a',i'}^{f}{}'$, or $K_{m',a',i'}^{f}{}'\sqcup U$ for some
$m',a',i'$ such that $(f,m',a',i')\in\mathrm{Ind}$ and $m'<m$.
Statements~(\ref{enu:induct1})~and~(\ref{enu:induct2}) hold for
$K_{m',a',i'}^{f}$ and hence for $K_{m',a',i'}^{f}\sqcup U$. Since
$K_{m',a',i'}^{f}{}'=r((rK)_{m',a',i'}^{f})$ where $r$ means orientation
reverse and $rK$ has a negative diagram of writhe $w$, Statements~(\ref{enu:induct1})~and~(\ref{enu:induct2})
hold for $K_{m',a',i'}^{f}{}'$ and $K_{m',a',i'}^{f}{}'\sqcup U$
as well.

We will use the exact triangles for $\overline{Kh}$ and $\overline{Kh}_{Lee}$
to prove the induction step. For this, let us study the homological
degrees of the maps involved. First, $x_{L}$ maps to a nonzero multiple
of $x_{L_{o}}$, and so the map from $L$ to $L_{o}$ has homological
degree $0$. Hence the exact triangles are
\begin{gather}
\cdots\to\overline{Kh}^{h+d}(L_{u})\to\overline{Kh}^{h}(L)\to\overline{Kh}^{h}(L_{o})\to\overline{Kh}^{h+d+1}(L_{u})\to\cdots\label{eq:long-exact}\\
\cdots\to\overline{Kh}_{Lee}^{h+d}(L_{u})\to\overline{Kh}_{Lee}^{h}(L)\to\overline{Kh}_{Lee}^{h}(L_{o})\to\overline{Kh}_{Lee}^{h+d+1}(L_{u})\to\cdots\label{eq:long-exact-lee}
\end{gather}
for some $d\in\mathbb{Z}$. Lemma~\ref{lem:technical} is the key
lemma; we prove it at the end.
\begin{lem}[{\cite[Lemma A.3 (4)]{ren2024khovanovv3}}]
\label{lem:technical}We have $d\ge0$. Furthermore, if $a<2m$,
then $d>0$.
\end{lem}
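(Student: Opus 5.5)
The plan is to compute $d$ explicitly by tracking a single Lee generator through the third map of the triangle, the one from $L_{o}$ to $L_{u}$, and to compare its homological grading with the renormalized positions of $x_{L_{o}}$ and $x_{L_{u}}$. For some orientation this band is orientable, so it sends a Lee generator to a nonzero multiple of a Lee generator. Concretely, I will choose an orientation $\mathfrak{o}$ on $L_{o}$ and $\mathfrak{o}'$ on $L_{u}$ such that the band $L_{o}\to L_{u}$ respects them. Two facts are recalled in the Convention. First, the band map is homogeneous of a fixed homological degree in the unoriented skein triangle. Second, an oriented band map sends $x_{\mathfrak{o}}$ to a nonzero multiple of $x_{\mathfrak{o}'}$. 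Together these show that $d+1$ equals $\mathrm{gr}_{h}(x_{\mathfrak{o}'})-\mathrm{gr}_{h}(x_{\mathfrak{o}})$, computed in the renormalized groups $\overline{Kh}_{Lee}(L_{u})$ and $\overline{Kh}_{Lee}(L_{o})$. (The map $L\to L_{o}$ was already normalized to degree $0$, which fixes the bookkeeping.)

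Next I will express both gradings relative to the normalized Lee generators using Lemma~\ref{lem:hgrading}. In $\overline{Kh}_{Lee}(L_{o})$ we have $\mathrm{gr}_{h}(x_{L_{o}})=-(2m+1)^{2}/2+1/2$. In $\overline{Kh}_{Lee}(L_{u})$ we have $\mathrm{gr}_{h}(x_{L_{u}})=-(2m'+1)^{2}/2+1/2$, with no extra shift when $L_{u}$ has a split unknot, by the convention for $Z\sqcup U$. The orientations $\mathfrak{o},\mathfrak{o}'$ differ from the braid orientations on certain sublinks. Since the unoriented resolution of the crossing $\sigma_{i}$ creates a turnback, at least one strand passing through the resolved crossing must be reversed. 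By Lemma~\ref{lem:hgrading}, each discrepancy equals twice a linking number between the reversed sublink and the rest, or equivalently half a writhe difference in the braid-closure diagram. The result should be a formula of the form
\[
d+1=\frac{(2m+1)^{2}-(2m'+1)^{2}}{2}-2\ell k_{L_{u}}+2\ell k_{L_{o}}
\]
(with appropriate signs). Here the linking numbers are counted crossing by crossing in the positive braid $(\sigma_{1}\cdots\sigma_{2m})^{a}\sigma_{1}\cdots\sigma_{i}$ together with the $f$-framing contribution, using Lemma~\ref{lem:orientation-op} for the framing part.

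The final step is to identify, via \cite[Lemma~A.3~(1)~and~(2)]{ren2024khovanovv3}, what $L_{u}$ looks like in each case: typically $m'=m-1$, and the reversed sublink of $L_{u}$ is the arc through the turnback. I will then bound the linking terms. The crossings involving a reversed strand are those it meets in the $a$ full passes of $\sigma_{1}\cdots\sigma_{2m}$ plus the partial pass $\sigma_{1}\cdots\sigma_{i}$. Each full pass contributes at most a bounded number of crossings, so the number of sign-changed crossings grows at most linearly in $a$. Meanwhile $((2m+1)^{2}-(2m'+1)^{2})/2=4m$ when $m'=m-1$. The inequality $d\ge0$ should then reduce to an inequality of the form (number of reversed crossings) $\le 4m-1+\epsilon(a,i)$, with equality possible only at $a=2m$. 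This would give $d>0$ whenever $a<2m$. The $f$-framing terms should cancel between $L_{o}$ and $L_{u}$, since $f\le0$ enters both via Lemma~\ref{lem:orientation-op} in the same way.

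I expect the main obstacle to be this last step. It requires an exact case-by-case description of $L_{u}$ and of the reversed sublink in each case of Stošić's Lemma A.3, including the flipped cases $K'$ and the cases with a split unknot, and then a crossing count that is exact enough to see strict inequality precisely when $a<2m$. If the crossing count becomes messy, I would first try to reduce to the writhe form of Lemma~\ref{lem:hgrading}. In that form only the writhes of the two oriented diagrams are needed, and for a braid-with-turnback diagram these are computed by counting, in each of the $a$ passes, how many crossings the turnback strand makes with same- versus opposite-oriented strands.
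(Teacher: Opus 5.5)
\textbf{There is a genuine gap.} It is your claim that, for a suitable choice of orientations, the band $L_{o}\to L_{u}$ is orientable. The three bands of the unoriented skein triangle are cyclically nonorientable, split, merge, so there are two cases.

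If $L\to L_{o}$ is a split, then $L_{o}\to L_{u}$ is a merge, and your plan works. Here $x_{L_{o}}^{J}$ maps to a multiple of $x_{L_{u}}$, where $J$ is a sublink containing exactly one of the two strands at the resolution site and $|J|=m-m'$. So $d+1=\mathrm{gr}_{h}x_{L_{u}}-\mathrm{gr}_{h}x_{L_{o}}^{J}$.

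If $L\to L_{o}$ is a merge, then $L_{u}\to L$ is the split and $L_{o}\to L_{u}$ is the nonorientable band. Since $|L_{o}|=|L_{u}|$, this band has both feet on one component and returns one component. It is therefore nonorientable for \emph{every} choice of orientations, and its Lee map is identically zero. There is no Lee generator to track, so $d$ cannot be read off this map.

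This is not a corner case. For $a=2m$ (and $i\ge1$) one has $|L|=i+1$ and $|L_{o}|=i$, so $L\to L_{o}$ is always a merge there. These are exactly the cases that feed into Lemma~\ref{lem:band-map}. The missing idea is to use the other band in this case. The band $L_{u}\to L$ is an orientable split of degree $-d$ sending $x_{L_{u}}$ to a nonzero multiple of $x_{L}^{J}$ with $|J|=m-m'$, so $d=\mathrm{gr}_{h}x_{L_{u}}-\mathrm{gr}_{h}x_{L}^{J}$.

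\textbf{Your expectations for the counting step are also off.} The clean device is to compare with $C=K_{m,2m,2m}^{f}$, whose braid is the full twist. There every strand closes up, and Lemma~\ref{lem:orientation-op} gives $\mathrm{gr}_{h}x_{C}^{J}-\mathrm{gr}_{h}x_{C}=(f+1)\cdot2|J|(2m+1-|J|)$. Moreover, $2|J|(2m+1-|J|)=((2m+1)^{2}-(2m'+1)^{2})/2$ exactly when $|J|=m-m'$. The positive crossings between $J$ and its complement in $D_{m,a,i}$ form a subset of those in the full twist. Letting $\#Cr\ge0$ be the number missing, you get exactly $d=-2f|J|(2m+1-|J|)+\#Cr$ in the merge case. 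In the split case the same expression gives $d+1$, with $L_{o}$ in place of $L$.

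So the framing terms do not cancel, as you predicted. The generator $x_{L_{u}}$ is pinned at $-(2m'+1)^{2}/2+1/2$ independently of $f$, while reversing $J$ picks up the framing contribution. It is the sign $f\le0$ that makes this term harmless, and this is where that hypothesis enters.

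From there, $\#Cr\ge1$ when $a<2m$, coming from a crossing in the last pass. In the split case the removed crossing $\sigma_{i}$ always contributes one more, since it joins a strand of $J$ to one outside $J$. A per-pass bound linear in $a$ would not get you there, since in one pass $\sigma_{1}\cdots\sigma_{2m}$ the strand in position $1$ crosses all $2m$ others. No case-by-case description of $L_{u}$ is needed beyond $m'<m$ and $|J|=m-m'$.
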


Let us show Statement~(\ref{enu:induct1}) for $L$. Let $h>0$;
then since $h+d\ge h>0$, $\overline{Kh}^{h+d}(L_{u})$ and $\overline{Kh}^{h}(L_{o})$
vanish. Hence $\overline{Kh}^{h}(L)$ also vanishes since (\ref{eq:long-exact})
is exact.

Let us show Statement~(\ref{enu:induct2}) for $L$. If $a<2m$,
then $d>0$, and so $\overline{Kh}^{d}(L_{u})$ and $\overline{Kh}^{d+1}(L_{u})$
vanish. Hence
\[
0\to\overline{Kh}^{0}(L)\to\overline{Kh}^{0}(L_{o})\to0\ \mathrm{and}\ 0\to\overline{Kh}_{Lee}^{0}(L)\to\overline{Kh}_{Lee}^{0}(L_{o})\to0
\]
are exact since (\ref{eq:long-exact})~and~(\ref{eq:long-exact-lee})
are exact. Thus Statement~(\ref{enu:induct2}) follows.

If $a=2m$, then $|L|=i+1$ and $|L_{o}|=i$, and so $\overline{Kh}_{Lee}^{h}(L_{o})\to\overline{Kh}_{Lee}^{h+d+1}(L_{u})$
is a nonorientable band map which is zero. Furthermore, $\overline{Kh}^{d+1}(L_{u})$
vanishes since $d\ge0$. Hence, 
\begin{gather}
\overline{Kh}^{-1}(L_{o})\to\overline{Kh}^{d}(L_{u})\xrightarrow{G}\overline{Kh}^{0}(L)\to\overline{Kh}^{0}(L_{o})\to0,\label{eq:kh1}\\
0\to\overline{Kh}_{Lee}^{d}(L_{u})\to\overline{Kh}_{Lee}^{0}(L)\to\overline{Kh}_{Lee}^{0}(L_{o})\to0\label{eq:khlee1}
\end{gather}
are exact since (\ref{eq:long-exact})~and~(\ref{eq:long-exact-lee})
are exact. Thus we have 
\begin{align*}
\dim\overline{Kh}_{Lee}^{0}(L) & \le\dim\overline{Kh}^{0}(L)\\
 & =\dim\overline{Kh}^{d}(L_{u})+\dim\overline{Kh}^{0}(L_{o})-\dim({\rm ker}G) & \mathrm{(\ref{eq:kh1})\ is\ exact}\\
 & \le\dim\overline{Kh}^{d}(L_{u})+\dim\overline{Kh}^{0}(L_{o})\\
 & =\dim\overline{Kh}_{Lee}^{d}(L_{u})+\dim\overline{Kh}_{Lee}^{0}(L_{o}) & \mathrm{Statement\ (\ref{enu:induct2})\ for\ }L_{u}\ \mathrm{and\ }L_{o}\\
 & =\dim\overline{Kh}_{Lee}^{0}(L), & \mathrm{(\ref{eq:khlee1})\ is\ exact}
\end{align*}
and so the above inequalities are all equalities.

Since the first inequality is an equality, Statement~(\ref{enu:induct2})
holds for $L$. Since the second inequality is an equality, the map
$G:\overline{Kh}^{d}(L_{u})\to\overline{Kh}^{0}(L)$ is injective.
If $L=K_{m,2m,2m}^{0}$, then $L_{u}$ is isotopic to $K_{m-1,2m-2,2m-2}^{0}\sqcup U$
(\cite[Lemma~A.3~(3)]{ren2024khovanovv3}) and $G$ is precisely the
map $\varphi$ from Lemma~\ref{lem:band-map} (in particular, $d=0$).
Hence, Lemma~\ref{lem:band-map} follows.
\begin{proof}[Proof of Lemma~\ref{lem:technical}]
This follows from \cite[Lemma A.3 (4)]{ren2024khovanovv3}, but we
reprove it for completeness. Note that in this proof, ${\rm gr}_{h}$
always denotes the homological grading in $\overline{Kh}_{Lee}$.

Before we prove the lemma, let us study general $L:=K_{m,a,i}^{f}$.
By the \emph{$j$th strand} of a braid (on $(2m+1)$ strands), we
mean the $j$th input strand: e.g.\ if the braid is $\sigma_{1}\cdots\sigma_{i}$,
then $\sigma_{i}$ swaps the first strand and the $(i+1)$th strand.
Note that the $j$th strand might not close up to a link component
in the braid closure of $D_{m,a,i}$ and hence in $L$. Note that,
on the other hand, it does close up if $(m,a,i)=(m,2m,2m)$.

Let $J\subset\{1,\cdots,2m+1\}$ be a set of strands such that the
$j$th strands for $j\in J$ form a sublink of the braid closure of
$D_{m,a,i}$. Denote this sublink as $D_{m,a,i}^{J}$, and denote
the corresponding sublink of $L$ as $L^{J}$. Let $x_{L}^{J}$ be
the Lee generator of $L$ where $L\setminus L^{J}$ is oriented as
$K$ and $L^{J}$ is oriented oppositely.

Let $C:=K_{m,2m,2m}^{f}$. As an unoriented link, $C$ is an $(f+1)$-framed
$(2m+1)$-cable of $K$. Using the notation of Lemma~\ref{lem:orientation-op},
$x_{C}^{J}$ is $x_{\mathfrak{o}_{|J|}}$, and so by Lemma~\ref{lem:orientation-op}
we have (recall that $f\le0$)
\begin{align}
{\rm gr}_{h}x_{C}^{J}-{\rm gr}_{h}x_{C} & =(f+1)((2m+1)^{2}-(2m+1-2|J|)^{2})/2\nonumber \\
 & \le((2m+1)^{2}-(2m+1-2|J|)^{2})/2.\label{eq:third}
\end{align}

For link diagrams $D_{1},D_{2}$, define $cr(D_{1},D_{2})$ as the
set of crossings between $D_{1}$ and $D_{2}$. Then $cr(L^{J},L\setminus L^{J})\subset cr(C^{J},C\setminus C^{J})$,
and their difference comes from the difference between $cr(D_{m,a,i}^{J},D_{m,a,i}\setminus D_{m,a,i}^{J})\subset cr(D_{m,2m,2m}^{J},D_{m,2m,2m}\setminus D_{m,2m,2m}^{J})$,
which are all positive crossings. Hence, if we let $\#Cr:=|cr(D_{m,2m,2m}^{J},D_{m,2m,2m}\setminus D_{m,2m,2m}^{J})|-|cr(D_{m,a,i}^{J},D_{m,a,i}\setminus D_{m,a,i}^{J})|\ge0$,
then by Lemma~\ref{lem:hgrading}, we have
\begin{equation}
{\rm gr}_{h}x_{L}^{J}-{\rm gr}_{h}x_{L}+\#Cr=2\ell k(L^{J},L\setminus L^{J})+\#Cr=2\ell k(C^{J},C\setminus C^{J})={\rm gr}_{h}x_{C}^{J}-{\rm gr}_{h}x_{C}.\label{eq:cr-iden}
\end{equation}
Recall that $D_{m,2m,2m}$ is the braid closure of $(\sigma_{1}\cdots\sigma_{2m})^{2m+1}$.
If $a<2m$, then at least one of the crossings of the last $(\sigma_{1}\cdots\sigma_{2m})$
of $(\sigma_{1}\cdots\sigma_{2m})^{2m+1}$ contributes to $\#Cr$,
and so $\#Cr\ge1$.

Let us now prove the lemma. Recall that $L=K_{m,a,i}^{f}$, $i\ge1$,
$L_{o}=K_{m,a,i-1}^{f}$, $L_{u}$ is isotopic to $K_{m',a',i'}^{f}$,
$K_{m',a',i'}^{f}\sqcup U$, $K_{m',a',i'}^{f}{}'$, or $K_{m',a',i'}^{f}{}'\sqcup U$,
and $m'<m$. We consider two cases: the map $\overline{Kh}_{Lee}^{\bullet}(L)\to\overline{Kh}_{Lee}^{\bullet}(L_{o})$
is either a merge band map or a split band map. If it is a merge band
map, then $\overline{Kh}_{Lee}^{\bullet+d}(L_{u})\to\overline{Kh}_{Lee}^{\bullet}(L)$
is a split band map. The image of the Lee generator $x_{L_{u}}$ is
a nonzero multiple of $x_{L}^{J}$ for some subset $J\subset\{1,\cdots,2m+1\}$
such that $|J|=m-m'$. Then, we have (recall ${\rm gr}_{h}x_{L}=-(2m+1)^{2}/2+1/2$
by definition)
\begin{align*}
{\rm gr}_{h}x_{L}^{J}+\#Cr & ={\rm gr}_{h}x_{C}^{J}-{\rm gr}_{h}x_{C}-(2m+1)^{2}/2+1/2 & \mathrm{(\ref{eq:cr-iden})}\\
 & \le((2m+1)^{2}-(2m+1-2|J|)^{2})/2-(2m+1)^{2}/2+1/2 & \mathrm{(\ref{eq:third})}\\
 & =-(2m'+1)^{2}/2+1/2={\rm gr}_{h}x_{L_{u}}={\rm gr}_{h}x_{L}^{J}+d
\end{align*}
and so $d\ge0$ since $\#Cr\ge0$. If $a<2m$, then $d\ge1$ since
$\#Cr\ge1$.

If $\overline{Kh}_{Lee}^{\bullet}(L)\to\overline{Kh}_{Lee}^{\bullet}(L_{o})$
is a split band map, then $\overline{Kh}_{Lee}^{\bullet}(L_{o})\to\overline{Kh}_{Lee}^{\bullet+d+1}(L_{u})$
is a merge band map. Again, there exists some $J\subset\{1,\cdots,2m+1\}$
such that $|J|=m-m'$ and $x_{L_{o}}^{J}$ maps to a nonzero multiple
of $x_{L_{u}}$. Then, Equation~(\ref{eq:cr-iden}) for $(L_{o},m,a,i-1)$
instead of $(L,m,a,i)$ says that if we let $\#Cr:=|cr(D_{m,2m,2m}^{J},D_{m,2m,2m}\setminus D_{m,2m,2m}^{J})|-|cr(D_{m,a,i-1}^{J},D_{m,a,i-1}\setminus D_{m,a,i-1}^{J})|$
then we have
\begin{equation}
{\rm gr}_{h}x_{L_{o}}^{J}-{\rm gr}_{h}x_{L_{o}}+\#Cr={\rm gr}_{h}x_{C}^{J}-{\rm gr}_{h}x_{C}.\label{eq:split}
\end{equation}
Consider the rightmost $\sigma_{i}$ of the braid $(\sigma_{1}\cdots\sigma_{2m})^{a}\sigma_{1}\cdots\sigma_{i}$.
Exactly one of the two strands that this crossing swaps is in $J$,
and this crossing is not in $L_{o}$. Hence, it contributes to $\#Cr$,
and so $\#Cr\ge1$. If $a<2m$,\footnote{Note that in fact if $\overline{Kh}_{Lee}^{\bullet}(L)\to\overline{Kh}_{Lee}^{\bullet}(L_{o})$
is a split band map, then we must have $a<2m$.} then at least one of the crossings of the last $(\sigma_{1}\cdots\sigma_{2m})$
of $(\sigma_{1}\cdots\sigma_{2m})^{2m+1}$ contributes to $\#Cr$,
and this crossing is different from the above crossing. Hence $\#Cr\ge2$.

Thus, similarly to the case where $\overline{Kh}_{Lee}^{\bullet}(L)\to\overline{Kh}_{Lee}^{\bullet}(L_{o})$
is a merge band map, Equation~(\ref{eq:split}) and Inequality~(\ref{eq:third})
give ${\rm gr}_{h}x_{L_{o}}^{J}+\#Cr\le{\rm gr}_{h}x_{L_{u}}={\rm gr}_{h}x_{L_{o}}^{J}+d+1$,
and so $d\ge0$ since $\#Cr\ge1$. If $a<2m$, then $\#Cr\ge2$, and
so $d\ge1$.
\end{proof}

\subsection*{Acknowledgements}

We thank Peter Ozsv\'{a}th for his continuous support and helpful
discussions. Page~\pageref{pagelabel} onwards can be thought of
as expanded notes for a talk for the \href{https://sites.google.com/view/fa24lasagna/home}{Lasagna Reading Seminar},
Princeton, Fall 2024. We thank Robert Lipshitz, Ciprian Manolescu,
Qiuyu Ren, and the organizers and participants of the Lasagna Reading
Seminar for helpful conversations, and Ayodeji Lindblad, Robert Lipshitz,
and Qiuyu Ren for helpful comments on an earlier draft.

\enlargethispage{2\baselineskip}

\bibliographystyle{amsalpha}
\bibliography{/Users/gheehyun/Documents/writings/bib}

\end{document}